\newtheorem{teo}{Theorem}[section]
\newtheorem{prop}[teo]{Proposition}
\newtheorem{lemma}[teo]{Lemma}
\newtheorem{cor}[teo]{Corollary}
\theoremstyle{definition}
\newtheorem{eg}{Example}[section]
\newtheorem{rem}{Remark}
\newcommand{\en}{\mathbb{N}}	
\newcommand{\zet}{\mathbb{Z}}	
\newcommand{\qu}{\mathbb{Q}}	
\newcommand{\cFq}{\mathbb{F}_q}		
\newcommand{\cFqk}{\mathbb{F}_{q^k}}	
\newcommand{\zd}{\zet[\delta]}		
\newcommand{\zi}{\zet[i]}			
\newcommand{\zt}{\zet[\tau]}		
\newcommand{\qi}{\qu(i)}			
\newcommand{\qtau}{\qu(\tau)}		
\newcommand{\oo}{\mathcal{O}}		
\newcommand{\og}{\mathcal{O}_g}		
\newcommand{\p}{\mathcal{P}}
\DeclareMathOperator{\nr}{N}		
\DeclareMathOperator{\tr}{Tr}		
\DeclareMathOperator{\ord}{ord}		
\DeclareMathOperator{\End}{End}		
\title{Elliptic Curves with Isomorphic Groups of Points over Finite Field Extensions}
\author{
Clemens Heuberger\thanks{supported by the Austrian Science Fund (FWF): P~24644-N26.} \\ 
Alpen-Adria-Universit\"at Klagenfurt \\ \texttt{clemens.heuberger@aau.at}
\and
Michela Mazzoli\thanks{supported by the Karl Popper Kolleg ``Modeling Simulation Optimization'' funded by the Alpen-Adria-Universit\"at Klagenfurt and by the Carinthian Economic Promotion Fund (KWF).} \\
Alpen-Adria-Universit\"at Klagenfurt \\ \texttt{michela.mazzoli@aau.at}
}
\date{}
\begin{document}

\maketitle

\begin{abstract}
Consider a pair of ordinary elliptic curves $E$ and $E'$ defined over the same finite field $\cFq$. Suppose they have the same number of $\cFq$-rational points, i.e.\ $|E(\cFq)|=|E'(\cFq)|$. In this paper we characterise for which finite field extensions $\cFqk$, $k\geq 1$ (if any) the corresponding groups of $\cFqk$-rational points are isomorphic, i.e.\ $E(\cFqk) \cong E'(\cFqk)$.
\end{abstract}

\section{Introduction} \label{s:intro}

Consider a pair of ordinary elliptic curves $E$ and $E'$ defined over the same finite field $\cFq$, where $q$ is a prime power. Suppose $E$ and $E'$ have the same number of $\cFq$-rational points, i.e.\ $|E(\cFq)|=|E'(\cFq)|$. Equivalently, $E$ and $E'$ have the same characteristic polynomial, the same zeta function, hence the same number of $\cFqk$-rational points for every finite extension $\cFqk$ of $\cFq$, $k\geq 1$. This is equivalent to $E$ and $E'$ being $\cFq$-isogenous -- cf.~\cite[Theorem~1]{Tate66}. 
In this paper we characterise for which field extensions $\cFqk$, if any, the corresponding groups of $\cFqk$-rational points are isomorphic, i.e.\ $E(\cFqk) \cong E'(\cFqk)$.

The question was inspired by an article by C.\ Wittmann~\cite{Wittmann01}; we have summarised his result for the ordinary case in Proposition~\ref{prop:wittmann}. Wittmann's paper answers the question for $k=1$. Our main results are illustrated in Theorem~\ref{thm:mainthm} and Theorem~\ref{proposition:valuation-equivalence}. The first theorem reduces the isomorphism problem to a divisibility question for individual $k$'s. In the second theorem, the latter question is reduced to a simple verification of the multiplicative order of some elements, based only on information for $k=1$. Combining Theorem~\ref{thm:mainthm} and Theorem~\ref{proposition:valuation-equivalence}, we are able to tell for which $k\geq 1$ we have $E(\cFqk) \cong E'(\cFqk)$, given only the order of $E(\cFq)$ and the endomorphism rings of $E$ and $E'$.

\section{Isomorphic groups of $\cFqk$-rational points} \label{s:isom_group}

Let $E$ be an \emph{ordinary} elliptic curves defined over the finite field $\cFq$, where $q$ is a prime power. Let $\tau$ be the Frobenius endomorphism of $E$ relative to $\cFq$, namely $\tau(x,y)=\left(x^q,y^q\right)$. In the ordinary case, the endomorphism algebra $\qu\otimes\End_{\cFq}{(E)}$ of $E$ is equal to $\qtau$ -- cf.~\cite[Theorem~2]{Tate66}.

Since $\qtau$ is an imaginary quadratic field, it can be written as $\qu(\sqrt{m})$ for some square-free integer $m<0$. The ring of integers of $\qu(\sqrt{m})$ is $\zd$ where $\delta = \sqrt{m}$ if $m\equiv 2,3\pmod{4}$, or $\delta = \frac{1+\sqrt{m}}{2}$ if $m\equiv 1\pmod{4}$.

Then we can write $\tau=a+b\delta$ for some $a$, $b\in\zet$. It is well-known that the endomorphism ring of $E$ is an order in $\qtau$, that is $\End{(E)} \cong \oo_g = \zet + g\zd = \zet \oplus g\zet\delta$, where $g$ is the \emph{conductor} of the order $\oo_g$. Since $\zt = \oo_b \subseteq \End{(E)}$, we have $g\mid b$.

\begin{prop}[{\cite[Lemma~3.1]{Wittmann01}}] \label{prop:wittmann}
Let $E/\cFq$ and $E'/\cFq$ be ordinary elliptic curves s.t.\ $|E(\cFq)|=|E'(\cFq)|$. Let $\End{(E)} = \og$ and $\End{(E')}=\oo_{g'}$ be the orders in $\qtau$ of conductor $g$ and $g'$ respectively, let $\tau=a+b\delta$ as above. Then
\[
E(\cFq) \cong E'(\cFq) \quad \Leftrightarrow \quad \gcd{(a-1,b/g)} = \gcd{(a-1,b/g')} \:.
\]
\end{prop}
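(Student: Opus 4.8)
The plan is to realise $E(\cFq)$ explicitly as a quotient of the endomorphism ring and to read off its invariant factors. The starting point is the complex-multiplication structure of the group of rational points of an ordinary curve: as an abelian group,
\[
E(\cFq) \cong \og / (\tau - 1)\og,
\]
and likewise $E'(\cFq) \cong \oo_{g'}/(\tau-1)\oo_{g'}$, where in both cases $\tau - 1$ acts by multiplication. The two curves share the same Frobenius $\tau = a + b\delta$ because they have equal characteristic polynomials, so the only difference between the two quotients is the ambient order. I regard establishing this module description as the one genuinely non-elementary input; everything after it is integer linear algebra.

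Next I would make the quotient explicit. Fix the $\zet$-basis $\{1, g\delta\}$ of $\og = \zet \oplus g\zet\delta$ and set $\omega = g\delta$, so that $\tau - 1 = (a-1) + (b/g)\,\omega$ (note $g \mid b$, hence $b/g \in \zet$). Using the quadratic relation satisfied by $\omega$ over $\zet$, multiplication by $\tau - 1$ is represented, with respect to $\{1,\omega\}$, by an explicit $2\times 2$ integer matrix $M$ whose first column is $(a-1,\, b/g)^{\top}$. Then $\og/(\tau-1)\og \cong \zet^2 / M\zet^2$, whose structure is governed by the Smith normal form of $M$: writing the invariant factors as $d_1 \mid d_2$, one has $d_1 d_2 = |\det M|$ and $d_1 = \gcd$ of all entries of $M$.

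Two computations finish the argument. First, $|\det M| = \nr(\tau - 1) = |E(\cFq)|$, which equals $|E'(\cFq)|$ by hypothesis, so the two groups have the same order $N$. Second, a short gcd manipulation shows the entrywise gcd collapses: every entry of $M$ lies in the ideal $\bigl(\gcd(a-1, b/g)\bigr)$ of $\zet$ and both $a-1$ and $b/g$ occur among the entries, whence $d_1 = \gcd(a-1, b/g)$. Running the identical computation for $E'$ gives $d_1' = \gcd(a-1, b/g')$ with the same product $d_1' d_2' = N$. Since a finite abelian group of the form $\zet/d_1 \oplus \zet/(N/d_1)$ determines and is determined by its first invariant factor $d_1$, we obtain $E(\cFq)\cong E'(\cFq)$ if and only if $d_1 = d_1'$, i.e.\ if and only if $\gcd(a-1,b/g) = \gcd(a-1,b/g')$, as claimed.

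The main obstacle is really just the first step: pinning down the $\og$-module isomorphism $E(\cFq)\cong\og/(\tau-1)\og$ and checking that it is an isomorphism of abelian groups with $\tau-1$ acting by multiplication. Once that is in hand, the Smith-normal-form bookkeeping and the gcd simplification are routine, and the only thing to watch is the two cases $m\equiv 2,3$ and $m\equiv 1 \pmod 4$ entering the quadratic relation for $\delta$; but these affect only the entries of $M$ that do not survive into $d_1$, so the final gcd is insensitive to them.
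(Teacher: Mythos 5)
Your argument is correct and follows essentially the same route as the cited source: the paper itself offers no proof of this proposition (it is quoted from Wittmann's Lemma~3.1), and Wittmann's proof is exactly your strategy --- the $\og$-module isomorphism $E(\cFq)\cong\og/(\tau-1)\og$ followed by an invariant-factor computation. The one step you rightly flag as non-elementary, namely that $E(\cFq)\cong\og/(\tau-1)\og$ as $\og$-modules for an ordinary curve, is Lenstra's theorem on the complex-multiplication structure of elliptic curves over finite fields and should be cited as such; your Smith-normal-form bookkeeping (entrywise gcd equal to $\gcd(a-1,b/g)$ in both cases for $\delta$, determinant equal to $\nr(\tau-1)=|E(\cFq)|$) is accurate.
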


We note that, since $|E(\cFq)|=q+1-\tr{(\tau)}$, knowing the order of $E(\cFq)$ is equivalent to knowing the Frobenius endomorphism of $E$.
\\

As $E/\cFq$ can always be seen as defined over any field extension $\cFqk$, and the Frobenius endomorphism of $E$ with respect to $\cFqk$ is $\tau^k$, we obtain the following

\begin{cor} \label{cor:k_wittmann}
Let $E$ and $E'$ be as in Proposition~\ref{prop:wittmann}. Fix an integer $k\geq 1$ and write $\tau^k = a_k + b_k \delta$ for suitable $a_k$, $b_k\in\zet$. Then
\[
E(\cFqk) \cong E'(\cFqk) \quad \Leftrightarrow \quad \gcd{(a_k-1,b_k/g)} = \gcd{(a_k-1,b_k/g')} \:.
\]
\end{cor}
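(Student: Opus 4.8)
The plan is to apply Proposition~\ref{prop:wittmann} directly to $E$ and $E'$ regarded as curves over the extension $\cFqk$, with $\tau^k$ playing the role that $\tau$ played over $\cFq$. First I would check that the three hypotheses of the proposition transfer to the new base field. Ordinariness is a geometric property of $E_{\clFq}$, so it is unaffected by the extension of scalars and both curves remain ordinary over $\cFqk$; in particular $\tau^k\notin\qu$ (equivalently $b_k\neq 0$), whence the endomorphism algebra $\qu\otimes\End_{\cFqk}(E)$ equals $\qu(\tau^k)=\qtau$, with the same ring of integers $\zd$ and the same element $\delta$ as before. Since $|E(\cFq)|=|E'(\cFq)|$ means $E$ and $E'$ share the characteristic polynomial of Frobenius, and hence the same zeta function, they have equal point counts over every extension, so $|E(\cFqk)|=|E'(\cFqk)|$. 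Finally, as already recalled, the Frobenius of $E/\cFqk$ is $\tau^k=a_k+b_k\delta$.

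The one substantive point --- and the step I expect to be the main obstacle --- is that extending the base field does not alter the endomorphism rings, so that the conductors appearing in the proposition are still $g$ and $g'$, rather than some a priori different $g_k$, $g_k'$. The reason is special to the ordinary case: $\End_{\clFq}(E)$ is an order in the imaginary quadratic field $\qtau$ and is therefore commutative, and since $\tau\in\End_{\clFq}(E)$, every geometric endomorphism commutes with $\tau$, hence is fixed by $\mathrm{Gal}(\clFq/\cFq)$ and is already defined over $\cFq$. Thus $\End_{\cFq}(E)=\End_{\clFq}(E)$, and the chain $\zet[\tau^k]\subseteq\End_{\cFqk}(E)\subseteq\End_{\clFq}(E)=\End_{\cFq}(E)=\og$ forces $\End_{\cFqk}(E)=\og$; the same argument gives $\End_{\cFqk}(E')=\oo_{g'}$. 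Incidentally this also shows $\oo_{b_k}=\zet[\tau^k]\subseteq\og$, so that $g\mid b_k$ and $b_k/g\in\zet$ (and likewise $b_k/g'\in\zet$), which is exactly what makes the right-hand side of the claim meaningful.

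With the hypotheses verified and the conductors identified as $g$ and $g'$, the conclusion follows immediately: invoking Proposition~\ref{prop:wittmann} for $E$ and $E'$ over $\cFqk$, with Frobenius $\tau^k=a_k+b_k\delta$ and conductors $g$, $g'$, yields
\[
E(\cFqk)\cong E'(\cFqk)\quad\Leftrightarrow\quad\gcd(a_k-1,b_k/g)=\gcd(a_k-1,b_k/g').
\]
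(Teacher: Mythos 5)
Your proposal is correct and follows the same route as the paper, which justifies the corollary in a single sentence by viewing $E$ and $E'$ over $\cFqk$ with Frobenius $\tau^k$ and applying Proposition~\ref{prop:wittmann}. You merely make explicit the details the paper leaves implicit, in particular the (correct) verification that the endomorphism rings, and hence the conductors $g$ and $g'$, are unchanged under base extension in the ordinary case.
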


\begin{rem} \label{rem:End}
If $E$ and $E'$ are such that $|E(\cFq)|=|E'(\cFq)|$ and $\End{(E)}\cong\End{(E')}$, then it follows easily from Corollary~\ref{cor:k_wittmann} that $E(\cFqk) \cong E'(\cFqk)$ for every $k\geq 1$. So the problem is interesting when $\End{(E)}\not\cong\End{(E')}$ and yet $E(\cFqk) \cong E'(\cFqk)$ for all or some $k$'s. Indeed, there exist examples of curves with isomorphic group of points but different endomorphism rings, as illustrated in Section~\ref{s:examples}.

More precisely, by~\cite[Theorem~4.2]{Waterhouse69}, for every order $\oo$ in the endomorphism algebra of $E$ with $\tau\in\oo$, there exists an elliptic curve $E'$ s.t.\ $E'$ is $\cFq$-isogenous to $E$ and $\End{(E')}\cong\oo$.
\end{rem}

Given a prime number $p$ and an integer $n$, $v_p{(n)}$ is the $p$-adic valuation of $n$, i.e.\ the maximum power of $p$ that divides $n$.

\begin{lemma} \label{lemma:gcd_vp_bis}
Let $x,y,g,h\in\zet$ such that $g \mid y$ and $h\mid y$. Let $\p=\{p \text{ prime} \mid v_p{(g)}\not=v_p{(h)} \}$. For every $p\in\p$, let $s_p=\max{\{ v_p{(g)},v_p{(h)} \}}$. Then the following are equivalent
\begin{equation}\label{eq:gcd_xygh}
\gcd{(x,y/g)} = \gcd{(x,y/h)} \:,
\end{equation}
\begin{equation}\label{eq:gcd_vp_sp}
v_p{(x)} \leq v_p{(y)} - s_p \quad \text{ for all } p\in\p \:.
\end{equation}

\end{lemma}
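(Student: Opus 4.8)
The plan is to reduce both conditions to statements about $p$-adic valuations and then compare them prime by prime. Since a nonzero integer is determined up to sign by the collection of its valuations $v_p$, and since $v_p(\gcd(m,n)) = \min\{v_p(m),v_p(n)\}$, the equality \eqref{eq:gcd_xygh} holds if and only if
\[
\min\{v_p(x),\, v_p(y)-v_p(g)\} = \min\{v_p(x),\, v_p(y)-v_p(h)\}
\]
for every prime $p$. Here I use that the hypotheses $g\mid y$ and $h\mid y$ force $v_p(y/g)=v_p(y)-v_p(g)\geq 0$ and $v_p(y/h)=v_p(y)-v_p(h)\geq 0$, so these expressions are genuine non-negative valuations.

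First I would dispose of the primes $p\notin\p$: for such $p$ we have $v_p(g)=v_p(h)$ by definition of $\p$, so the two minima above coincide automatically and impose no condition. Thus \eqref{eq:gcd_xygh} is equivalent to the family of equalities indexed by $p\in\p$ alone, and it suffices to analyse a single such prime.

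Next, for a fixed $p\in\p$ I would exploit the symmetry in $g$ and $h$ to assume without loss of generality that $v_p(g)<v_p(h)$, so that $s_p=\max\{v_p(g),v_p(h)\}=v_p(h)$ and $v_p(y)-v_p(g)>v_p(y)-v_p(h)$. Writing $A=v_p(x)$, $B=v_p(y)-v_p(g)$ and $C=v_p(y)-v_p(h)$ with $B>C\geq 0$, the task reduces to determining when $\min\{A,B\}=\min\{A,C\}$. A short case analysis on the position of $A$ settles this: if $A\leq C$ then $A\leq C<B$, so both sides equal $A$; whereas if $A>C$ then $\min\{A,C\}=C$ while $\min\{A,B\}>C$ (as both $A>C$ and $B>C$), so the two sides differ. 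Hence the equality holds precisely when $A\leq C$, that is $v_p(x)\leq v_p(y)-v_p(h)=v_p(y)-s_p$, which is exactly condition \eqref{eq:gcd_vp_sp} at $p$.

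Combining the two reductions yields the stated equivalence. The argument is entirely elementary; the only point requiring care is the case analysis for the two minima, together with the bookkeeping ensuring that $s_p$ is the larger of $v_p(g)$ and $v_p(h)$ irrespective of which of $g$, $h$ dominates, and this is handled cleanly by the symmetry reduction. If one wishes to admit $x=0$ or $y=0$, the usual convention $v_p(0)=\infty$ keeps every step valid, since then the relevant minimum or inequality behaves as the analysis predicts.
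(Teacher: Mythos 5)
Your proof is correct and follows essentially the same route as the paper's: both reduce the gcd equality to the prime-by-prime statement $\min\{v_p(x),v_p(y/g)\}=\min\{v_p(x),v_p(y/h)\}$, note that only primes in $\p$ impose a condition, and conclude that for $p\in\p$ the two minima agree exactly when both equal $v_p(x)$, i.e.\ when $v_p(x)\le v_p(y)-s_p$. Your version merely spells out the case analysis that the paper leaves implicit.
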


\begin{proof}
It is enough to prove that~\eqref{eq:gcd_xygh} is equivalent to
\begin{equation}\label{eq:gcd_vp_g_h}
v_p{(x)} \leq v_p{(y/g)} \text{ and } v_p{(x)} \leq v_p{(y/h)} \quad \text{ for all } p\in\p \:.
\end{equation}

Since $v_p{(\gcd{(x,y/g)})}=\min{\{ v_p{(x)}, v_p{(y/g)} \}}$ for any prime number $p$, we have that $\gcd{(x,y/g)} = \gcd{(x,y/h)}$ if and only if
\[ \min{\{ v_p{(x)},v_p{(y/g)} \}}=\min{\{ v_p{(x)},v_p{(y/h)} \}} \]
for every prime $p$ s.t.\ $v_p{(g)}\not=v_p{(h)}$. Since $v_p{(y/g)} \not= v_p{(y/h)}$, the minimum must be $v_p{(x)}$. 

\end{proof}

Combining Corollary~\ref{cor:k_wittmann} and Lemma~\ref{lemma:gcd_vp_bis} yields

\begin{teo} \label{thm:mainthm}
Let $E/\cFq$ and $E'/\cFq$ be ordinary elliptic curves s.t.\ $|E(\cFq)|=|E'(\cFq)|$, let $\tau=a+b\delta$ be their Frobenius endomorphism relative to $\cFq$. Let $\End{(E)} = \og$ and $\End{(E')}=\oo_{g'}$ be the orders in $\qtau$ of conductor $g$ and $g'$ respectively. Let $\p:=\{p \mbox{ prime} \mid v_p{(g)}\not=v_p{(g')} \}$. For every $p\in\p$ define $s_p:=\max{\{ v_p{(g)},v_p{(g')} \}}$.

Fix an integer $k\geq 1$ and let $\tau^k = a_k + b_k \delta$ for some $a_k$, $b_k\in\zet$. Then
\begin{equation}	\label{eq:mainthm_eq}
E(\cFqk) \cong E'(\cFqk) \quad \Leftrightarrow \quad v_p(a_k-1) \leq v_p(b_k) - s_p \quad \text{ for all } p\in\p \:.
\end{equation}
\end{teo}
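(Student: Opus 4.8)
The plan is to obtain the statement by substituting the gcd characterisation of Corollary~\ref{cor:k_wittmann} into the valuation criterion of Lemma~\ref{lemma:gcd_vp_bis}. By Corollary~\ref{cor:k_wittmann}, the isomorphism $E(\cFqk)\cong E'(\cFqk)$ is equivalent to the equality $\gcd(a_k-1,b_k/g)=\gcd(a_k-1,b_k/g')$, which is precisely an instance of~\eqref{eq:gcd_xygh} under the substitution $x=a_k-1$, $y=b_k$, with the conductors $g$ and $g'$ playing the roles of $g$ and $h$. Under this substitution the set $\p$ and the integers $s_p$ of the lemma coincide verbatim with those of the theorem, so once the hypotheses of Lemma~\ref{lemma:gcd_vp_bis} are in place the lemma rewrites the gcd equality as the system $v_p(a_k-1)\le v_p(b_k)-s_p$ for all $p\in\p$, which is exactly~\eqref{eq:mainthm_eq}.

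The single non-routine point, which I would address first, is verifying the divisibility hypotheses $g\mid b_k$ and $g'\mid b_k$ required by the lemma. For $g$ this follows from ring closure: since $\tau=a+b\delta\in\og=\zet\oplus g\zet\delta$ and $\og$ is a ring, every power $\tau^k$ again lies in $\og$, and writing $\tau^k=a_k+b_k\delta$ forces the $\delta$-coefficient $b_k$ to be a multiple of $g$. For $g'$ the key observation is that $\tau$ is simultaneously the Frobenius endomorphism of $E'$: as $E$ and $E'$ share the same characteristic polynomial, they share the same Frobenius inside the common endomorphism algebra $\qtau$, so $\tau\in\End(E')\cong\oo_{g'}$. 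The same closure argument then yields $\tau^k\in\oo_{g'}$, and hence $g'\mid b_k$.

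With both divisibilities established, I would simply invoke Lemma~\ref{lemma:gcd_vp_bis} with $x=a_k-1$ and $y=b_k$ and chain it with Corollary~\ref{cor:k_wittmann} to conclude. I expect the main conceptual obstacle to be nothing more than this identification of $\tau$ as a common Frobenius lying in both orders $\og$ and $\oo_{g'}$; everything else is a direct transcription between the two auxiliary results, with no further computation needed.
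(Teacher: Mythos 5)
Your proposal is correct and follows exactly the route the paper takes, which is simply to chain Corollary~\ref{cor:k_wittmann} with Lemma~\ref{lemma:gcd_vp_bis} under the substitution $x=a_k-1$, $y=b_k$. Your explicit verification that $g\mid b_k$ and $g'\mid b_k$ via ring closure of the orders is a detail the paper leaves implicit, but it is the right justification and does not change the argument.
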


\vspace{1em}

\begin{rem} \label{rem:p_nmid_a}
Given a prime $p\in\p$ as in Theorem~\ref{thm:mainthm}, we clearly have that $v_p(b)\geq 1$ and $1\leq s_p \leq v_p(b)$. Moreover, $p\nmid a$ because, when the curve is ordinary, $a$ and $b$ are coprime. Suppose this is not the case; then there are $a'$, $b' \in\zet$ s.t.\ $\tau=p(a'+b'\delta)$ and $\nr{(\tau)}=p^2 \nr{(a'+b'\delta)}$, but $\nr{(\tau)}=q=\rho^n$, for some $\rho$ prime and $n\in\en$. Then $\rho$ must be equal to $p$ and therefore $\tr{(\tau)}=\rho\cdot\tr{(a'+b'\delta)} \equiv 0\pmod{\rho}$, so the curve is supersingular.
\end{rem}

In Theorem~\ref{proposition:valuation-equivalence} below we explain how to evaluate the right-hand side of~\eqref{eq:mainthm_eq}, without computing $a_k$, $b_k$ for $k>1$. Before that, we need a couple of technical lemmata.

The following lemma is a slight adaptation of~\cite[Lemma~(5)]{padic-formulas}.

\begin{lemma} \label{lemma:vp(plm_r)}
Let $p$ be a prime, $\ell\geq 1$, $m$ coprime to $p$ and $0<r\leq p^\ell$. Then	
\begin{equation*}
v_p{\left(\binom{p^\ell m}{r}\right)} = \ell - v_p{(r)} \:.
\end{equation*}
\end{lemma}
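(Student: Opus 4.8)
The plan is to prove the formula
\[
v_p\!\left(\binom{p^\ell m}{r}\right) = \ell - v_p(r)
\]
by exploiting the standard product representation of the binomial coefficient and tracking the $p$-adic valuation factor by factor. Writing
\[
\binom{p^\ell m}{r} = \frac{(p^\ell m)(p^\ell m - 1)\cdots(p^\ell m - r + 1)}{r!} = \frac{p^\ell m}{r}\binom{p^\ell m - 1}{r - 1},
\]
I would isolate the factor $\tfrac{p^\ell m}{r}$, whose valuation is immediately $\ell - v_p(r)$ since $m$ is coprime to $p$. The whole task then reduces to showing that the remaining binomial coefficient $\binom{p^\ell m - 1}{r-1}$ is a $p$-adic unit, i.e.\ that $v_p\!\left(\binom{p^\ell m - 1}{r-1}\right) = 0$.

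To establish that, the key observation is a pairwise matching of valuations in numerator and denominator. In the product
\[
\binom{p^\ell m - 1}{r - 1} = \prod_{j=1}^{r-1} \frac{p^\ell m - j}{j},
\]
I would argue that for each $j$ with $1 \leq j \leq r - 1 < p^\ell$, the numerator term $p^\ell m - j$ satisfies $v_p(p^\ell m - j) = v_p(j)$. This holds because $v_p(j) < \ell$ (as $0 < j < p^\ell$), so the $p^\ell m$ contribution does not interfere: writing $j = p^{v_p(j)} u$ with $u$ coprime to $p$, the term $p^\ell m - j = p^{v_p(j)}(p^{\ell - v_p(j)} m - u)$ has the bracketed factor coprime to $p$ since $\ell - v_p(j) \geq 1$. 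Consequently every factor $\tfrac{p^\ell m - j}{j}$ has valuation zero, and the product telescopes in valuation to $0$.

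The cleanest way to present this is probably as a direct computation of $v_p\!\left(\binom{p^\ell m}{r}\right)$ via the full product $\prod_{j=0}^{r-1}(p^\ell m - j)/\prod_{j=1}^{r}j$, where the $j = 0$ numerator term contributes $v_p(p^\ell m) = \ell$, the matching denominator term is $r$ contributing $v_p(r)$, and all intermediate factors cancel in valuation by the argument above. I expect the main (though mild) obstacle to be formulating the pairing of numerator and denominator indices cleanly so that the $j = 0$ term lines up against $r$ rather than against $j$; once the reindexing is set up correctly, the statement $v_p(p^\ell m - j) = v_p(j)$ for $1 \leq j < p^\ell$ carries the entire argument. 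The restriction $0 < r \leq p^\ell$ is essential precisely to guarantee $v_p(j) < \ell$ for all intermediate $j$, which is what makes the interference term vanish.
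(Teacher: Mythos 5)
Your proof is correct and takes essentially the same route as the paper: the paper also computes $v_p\bigl(\binom{p^\ell m}{r}\bigr)=\sum_{j=0}^{r-1}v_p(p^\ell m-j)-\sum_{j=1}^{r}v_p(j)$ and cancels all intermediate terms via the same key observation that $v_p(p^\ell m-j)=v_p(j)$ for $1\le j<p^\ell$, leaving $\ell-v_p(r)$.
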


\begin{proof}
For $1\le j< p^{\ell}$, we clearly have $v_p(j)<\ell$ and $v_p(p^\ell m-j)<\ell$ as well as $p^\ell m-j \equiv -j\pmod{p^\ell}$, thus $v_p(p^\ell m-j)=v_p(j)$.

We conclude that
\begin{equation*}
v_p\biggl(\binom{p^\ell m}{r}\biggr) = \sum_{j=0}^{r-1}v_p(p^\ell m-j)-\sum_{j=1}^r v_p(j)=\ell-v_p(r) \:.
\end{equation*}
\end{proof}

The following lemma seems to be folklore, see for instance \cite{LTElemma}.

\begin{lemma}[Lifting the exponent] \label{lemma:lte}
Let $p$ be a prime, $a$, $b\in\zet$ with $a\equiv b\not\equiv 0\pmod p$ and $k$ be a positive integer. If $p=2$, assume additionally that $a\equiv b\pmod 4$. Then
\begin{equation*}
v_p(a^k-b^k)=v_p(a-b) + v_p(k) \:.
\end{equation*}
\end{lemma}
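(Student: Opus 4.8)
The plan is to reduce the general exponent $k$ to two special cases, the case $p\nmid k$ and the case $k=p$, and then combine them by writing $k=p^n m$ with $p\nmid m$ and $v_p(k)=n$. In both special cases the engine is a careful computation of $p$-adic valuations, using Lemma~\ref{lemma:vp(plm_r)} to control binomial coefficients.

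First I would treat the coprime case $p\nmid k$. Factoring $a^k-b^k=(a-b)\sum_{i=0}^{k-1}a^{k-1-i}b^i$ and reducing the cofactor modulo $p$ via $a\equiv b\pmod p$, each summand is congruent to $a^{k-1}$, so the cofactor is $\equiv k\,a^{k-1}\pmod p$. Since $p\nmid k$ and $p\nmid a$, this is nonzero modulo $p$, whence the cofactor has valuation $0$ and $v_p(a^k-b^k)=v_p(a-b)$. This step works uniformly for every prime, including $p=2$, and does not need the extra $\pmod 4$ hypothesis.

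Next, the heart of the argument, the single prime step $k=p$. Write $a=b+c$ with $t:=v_p(c)=v_p(a-b)\geq 1$ (and $t\geq 2$ when $p=2$, by the $\pmod 4$ assumption). Expanding by the binomial theorem,
\[
a^p-b^p=\sum_{r=1}^{p}\binom{p}{r}b^{p-r}c^r,
\]
I would read off the valuation of each term. Since $p\nmid b$ we have $v_p(b^{p-r})=0$, while $v_p(c^r)=rt$ and, by Lemma~\ref{lemma:vp(plm_r)} with $\ell=1$ and $m=1$, $v_p\binom{p}{r}=1-v_p(r)$ for $1\leq r\leq p$. Hence the $r$-th term has valuation $(1-v_p(r))+rt$. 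The term $r=1$ has valuation $1+t$, and I would verify that every term with $2\leq r\leq p$ has strictly larger valuation, i.e.\ $(r-1)t>v_p(r)$: for $p$ odd this follows from $r\geq p^{v_p(r)}\geq 3^{v_p(r)}$ together with $t\geq 1$, and for $p=2$ the only case is $r=2$, where $(r-1)t=t\geq 2>1=v_2(2)$ precisely because of the $\pmod 4$ hypothesis. Thus $r=1$ is the unique term of minimal valuation, giving $v_p(a^p-b^p)=1+t=v_p(a-b)+1$.

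Finally I would assemble the result. Writing $k=p^n m$ with $p\nmid m$, the coprime case applied to $X=a^{p^n}$ and $Y=b^{p^n}$ (which satisfy $X\equiv Y\not\equiv 0\pmod p$ since $a\equiv b\not\equiv 0\pmod p$) gives $v_p(a^k-b^k)=v_p(a^{p^n}-b^{p^n})$, and iterating the single prime step $n$ times yields $v_p(a^{p^n}-b^{p^n})=v_p(a-b)+n=v_p(a-b)+v_p(k)$. For $p=2$ the $\pmod 4$ hypothesis required to apply the prime step to $a^{2^j},b^{2^j}$ is the given one for $j=0$ and is automatic for $j\geq 1$, since every odd square is $\equiv 1\pmod 4$. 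The main obstacle is the valuation bookkeeping in the prime step, and in particular the strict inequality $(r-1)t>v_p(r)$ that forces the leading term to dominate; this is exactly where the $p=2$ exception and the $\pmod 4$ condition become unavoidable.
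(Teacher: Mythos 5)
Your proof is correct and complete. Note that the paper itself does not prove Lemma~\ref{lemma:lte} at all: it labels the result as folklore and simply cites a reference, so there is no in-paper argument to compare against. Your write-up supplies the standard proof of lifting the exponent, and it does so in a way that fits the paper well: the decomposition into the coprime case ($p\nmid k$, handled by reducing the cofactor $\sum_i a^{k-1-i}b^i \equiv k a^{k-1}\pmod p$), the single prime step $k=p$ (where the term $r=1$ in the binomial expansion is the unique term of minimal valuation $1+t$, the inequality $(r-1)t>v_p(r)$ controlling the rest), and the iteration over $k=p^n m$ is exactly the usual route, and you correctly isolate where the $p=2$, $a\equiv b\pmod 4$ hypothesis is needed (the $r=2$ term in the prime step) and why it propagates for free to the higher iterates ($a^{2^j}$ and $b^{2^j}$ are odd squares, hence both $\equiv 1\pmod 4$ for $j\geq 1$). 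Reusing Lemma~\ref{lemma:vp(plm_r)} with $\ell=m=1$ to get $v_p\bigl(\binom{p}{r}\bigr)=1-v_p(r)$ is a nice economy, since that lemma is already established in the paper; the only quibble is that for $k=p$ one hardly needs it ($\binom{p}{r}$ is divisible by $p$ exactly once for $0<r<p$ and $\binom{p}{p}=1$), but the appeal is valid as stated.
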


\begin{teo} \label{proposition:valuation-equivalence}
Let $\delta$ be an algebraic integer of degree $2$, $a$,
  $b$, $g\in\zet$, $p$ a prime number such that $p\mid g \mid b$ and $p\nmid a$. Let $k$ be a
  positive integer and ${(a+b\delta)}^k = a_k+b_k\delta$ for suitable integers
  $a_k$ and $b_k$. Let $e$ be the order of $a$ modulo $p$ (for odd $p$) or the
  order of $a$ modulo $4$ (for even $p$).

  If $(p, v_p(b))=(2, 1)$, assume additionally that $2\nmid k$.

  Then the following assertions
  \begin{equation}\label{eq:original-assertion}
    v_p(a_k-1)> v_p(b_k) - v_p(g)
  \end{equation}
  and
  \begin{equation}\label{eq:second-assertion}
    v_p(a^e-1)- v_p(e) > v_p(b)-v_p(g)\text{ and }e\mid k\text{, or }(p, v_2(k), v_2(b/g))=(2, 0, 0)
  \end{equation}
  are equivalent.
\end{teo}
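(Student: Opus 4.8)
The plan is to expand $(a+b\delta)^k$ binomially and read off $a_k$ and $b_k$ $p$-adically. Writing the minimal polynomial of $\delta$ as $\delta^2=t\delta-n$ with $t,n\in\zet$, one has $\delta^j=c_j+d_j\delta$ for integers given by $c_0=1$, $c_1=0$, $d_0=0$, $d_1=1$ and the recurrence $c_{j+1}=-n\,d_j$, $d_{j+1}=c_j+t\,d_j$. Substituting gives
\[
a_k = \sum_{j\ge 0}\binom{k}{j}a^{k-j}b^j c_j, \qquad b_k = \sum_{j\ge 1}\binom{k}{j}a^{k-j}b^j d_j,
\]
and the two structural facts I would extract immediately are that $b\mid b_k$ with leading term $k\,a^{k-1}b$, and that, because $c_1=0$, every term of $a_k-1$ beyond $a^k-1$ carries a factor $b^j$ with $j\ge 2$; hence $a_k\equiv a^k\pmod{p^{2v_p(b)}}$.

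First I would pin down $v_p(b_k)$ and show it equals $v_p(k)+v_p(b)$. The $j=1$ term has valuation exactly $v_p(k)+v_p(b)$, while for $j\ge2$ a standard estimate for $v_p\bigl(\binom{k}{j}\bigr)$ (e.g.\ via Lemma~\ref{lemma:vp(plm_r)}) bounds each higher term below by $v_p(k)+(j-1)v_p(b)-v_p(j)$, which strictly exceeds $v_p(k)+v_p(b)$ for every $j\ge2$ — except in the single borderline situation $p=2$, $v_p(b)=1$, $j=2$. This exception is exactly what the standing hypothesis $2\nmid k$ removes, since for odd $k$ the factor $\binom{k}{2}$ becomes $2$-adically large enough. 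Consequently the right-hand threshold of \eqref{eq:original-assertion} is explicit: $R:=v_p(b_k)-v_p(g)=v_p(k)+v_p(b)-v_p(g)$.

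Next I would evaluate $v_p(a_k-1)$ using the congruence $a_k\equiv a^k\pmod{p^{2v_p(b)}}$: whenever $v_p(a^k-1)<2v_p(b)$ we get $v_p(a_k-1)=v_p(a^k-1)$, and Lemma~\ref{lemma:lte} computes the latter, giving $v_p(a^k-1)=v_p(a^e-1)+v_p(k)-v_p(e)$ when $e\mid k$, and $v_p(a^k-1)=0$ when $e\nmid k$ and $p$ is odd. The decisive observation is that, for $e\mid k$,
\[
v_p(a^k-1)-R = \bigl(v_p(a^e-1)-v_p(e)\bigr)-\bigl(v_p(b)-v_p(g)\bigr)
\]
is independent of $k$, so ``$v_p(a^k-1)>R$'' is literally the first inequality of \eqref{eq:second-assertion}. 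Assembling: if $e\nmid k$ and $p$ is odd then $v_p(a_k-1)=0\le R$, so both assertions fail; if $e\mid k$ they agree as long as $v_p(a^k-1)<2v_p(b)$.

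The part I expect to be the main obstacle is twofold. First, the regime $v_p(a^k-1)\ge 2v_p(b)$, where the congruence only controls $a_k-1$ modulo $p^{2v_p(b)}$; there I would show that the correction sum $\sum_{j\ge2}\binom{k}{j}a^{k-j}b^j c_j$ has valuation dominated by its $j=2$ term and strictly exceeding $R$, so that $v_p(a_k-1)>R$ again collapses to the $k$-independent condition $v_p(a^k-1)>R$. Second, and most delicate, the prime $p=2$: here the order $e$ is taken modulo $4$, Lemma~\ref{lemma:lte} needs its extra $a\equiv b\pmod 4$ hypothesis, and $a^k\equiv 3\pmod 4$ forces $v_2(a^k-1)=1$ even when $e=2\nmid k$. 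This last phenomenon, combined with $R=v_2(b)-v_2(g)$, is exactly what produces the exceptional disjunct $(p,v_2(k),v_2(b/g))=(2,0,0)$ in \eqref{eq:second-assertion}; verifying that the borderline binomial contributions at $p=2$ behave as required — which is what the hypothesis $2\nmid k$ secures — is where the bookkeeping is heaviest.
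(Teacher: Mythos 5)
Your proposal follows essentially the same route as the paper: binomial expansion giving $a_k\equiv a^k$ and $b_k\equiv ka^{k-1}b$ modulo a sufficiently high power of $p$, the valuation bound $v_p\bigl(\binom{k}{j}b^j\bigr)\ge v_p(k)+v_p(b)+1$ for $j\ge 2$ (which settles both $v_p(b_k)=v_p(k)+v_p(b)$ and your deferred ``main obstacle'' regime $v_p(a^k-1)\ge 2v_p(b)$ in one stroke, since that bound already exceeds the threshold $R$), followed by lifting the exponent to make the condition independent of $k$. The only quibble is your justification of the borderline case $(p,v_p(b),j)=(2,1,2)$: for odd $k$ the coefficient $\binom{k}{2}$ need not be $2$-adically large (e.g.\ $\binom{3}{2}=3$); the real point is that $v_2(k)=0$ lowers the threshold to $v_2(b)+1=2$, which $v_2(b^2)=2$ already meets.
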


Note that $v_p(e)=0$ unless $p=2$ and $a\equiv -1\pmod{4}$.

\begin{rem} \label{rem:nastycase}
The case of $p=2$, $v_p(b)=1$ and $2\mid k$ can be reduced to
Theorem~\ref{proposition:valuation-equivalence} by considering
\begin{equation*}
  {(a+b\delta)}^k={\left(a^2+2ab\delta+b^2\delta^2\right)}^{k/2}
\end{equation*}
and noting that $2b$ and $b^2$ are certainly divisible by $4$, so
Theorem~\ref{proposition:valuation-equivalence} can be applied on the 
right-hand side after rewriting $\delta^2$ in terms of $1$ and $\delta$.
\end{rem}

\begin{proof}
  Set $v_p(b)=t$ and $v_p(k)=\ell$. We claim that
  \begin{equation}\label{eq:a_k-b_k-congruence}
    \begin{aligned}
      a_k&\equiv a^k \pmod{p^{\ell+t+1}} \:,\\
      b_k&\equiv ka^{k-1}b \pmod{p^{\ell+t+1}} \:.
    \end{aligned}
  \end{equation}
  For $k=1$, there is nothing to show. Write $k=p^\ell m$ for some integer $m$ with $p\nmid m$. Then
  \begin{equation*}
    a_k+b_k\delta = (a+b\delta)^k= a^{k} + ka^{k-1}b\delta +
    \sum_{r=2}^{k}\binom{k}{r} a^{k-r}b^r\delta^r \:.
  \end{equation*}
  To prove \eqref{eq:a_k-b_k-congruence}, we have to show that the last sum is
  in $p^{\ell+t+1}\zet[\delta]$. It is sufficient to show that
  \begin{equation}\label{eq:divisibile-by-p-power}
    p^{\ell+t+1}\mid\binom{k}{r} b^r
  \end{equation}
  for $2\le r\le k$. This is immediately clear for $tr\ge
  \ell+t+1$ as well as $\ell=0$. So we may restrict ourselves to the case $2\le r\le \ell+1$, in particular $\ell>0$.
  By the Bernoulli inequality, we have $p^\ell\ge 1+\ell(p-1)\ge 1+\ell\ge r$,
  thus we can apply Lemma~\ref{lemma:vp(plm_r)} and obtain
  \begin{equation*}
    v_p\Bigl(\binom{p^\ell m}{r}b^r\Bigr)=\ell+rt-v_p(r).
  \end{equation*}
  In order to prove~\eqref{eq:divisibile-by-p-power} we have to show
  that $\ell+rt-v_p(r)\ge \ell+t+1$ or, equivalently,
  \begin{equation}\label{eq:inequality-valuation}
    t(r-1)\ge v_p(r)+1 \:.
  \end{equation}
  If $v_p(r)=0$, there is nothing to show. Otherwise, we have 
  \begin{equation*}
    r\ge p^{v_p(r)}\ge 1+(p-1)v_p(r)
  \end{equation*}
  by the Bernoulli inequality. For $p\neq 2$, or $p=2$ and $t\ge 2$, this
  implies~\eqref{eq:inequality-valuation} immediately.
  For $(p,t)=(2,1)$ we use the additional assumption that $2\nmid k$, i.e.\ $\ell=0$, for which~\eqref{eq:divisibile-by-p-power} has already been seen to hold.

This concludes the proof of~\eqref{eq:a_k-b_k-congruence}. It follows that
\begin{align}
v_p(b_k) & = \ell+t \:, \label{eq:valuation-b_k} \\
v_p(a_k-a^k) & \geq \ell+t+1\:. \label{eq:valuation-a_k}
\end{align}

  If $v_p(a^k-1)\ge \ell+t+1$, then
  \begin{equation*}
    v_p(a_k-1)=v_p\bigl((a_k-a^k) + (a^k-1)\bigr)\ge \ell+t+1 \:,
  \end{equation*}
  and we have $v_p(a_k-1)> \ell+t=v_p(b_k)>v_p(b_k)-v_p(g)$.
  Otherwise, from~\eqref{eq:valuation-a_k} we have
  \begin{equation*}
    v_p(a_k-1)=v_p\bigl((a_k-a^k) + (a^k-1)\bigr)= v_p(a^k-1) \:.
  \end{equation*}

  We conclude that the assertions~\eqref{eq:original-assertion}
  and
  \begin{equation}\label{eq:14-b}
    v_p(a^k-1)>\ell+t-v_p(g)
  \end{equation}
  are equivalent.

  If $e\mid k$, then 
  \begin{equation}\label{eq:lte-consequence}
    v_p(a^k-1)=v_p(a^{e
      (k/e)}-1)=v_p(a^e-1)+v_p\Bigl(\frac{k}{e}\Bigr)=v_p(a^e-1)+\ell - v_p(e) \:.
  \end{equation}
  by Lemma~\ref{lemma:lte}.

  Consider the case of $p\neq 2$.
  As $\ell+t-v_p(g)\ge 0$ because of $g\mid b$, \eqref{eq:14-b} can
  only hold if $a^k\equiv 1\pmod p$ and thus $e\mid k$. By using
  \eqref{eq:lte-consequence}, we see that \eqref{eq:14-b}
  and \eqref{eq:second-assertion} are equivalent.

  Consider now the case of $p=2$. If $\ell=0$ and $v_2(g)=t$, then~\eqref{eq:14-b}
  holds because $v_2(a_k-1)\ge 1$ is always true. Otherwise,
  $\ell+t-v_p(g)>0$ and $v_p(a^k-1)>\ell+t-v_p(g)>0$ implies that
  $v_2(a^k-1)>1$, thus $e\mid k$. In that case, \eqref{eq:lte-consequence}
  implies the equivalence of \eqref{eq:14-b} and \eqref{eq:second-assertion}.
\end{proof}

\section{Examples} \label{s:examples}

\begin{eg} \label{eg:q=3329,tr=50}
Let $q=3329$ and consider the elliptic curve
\[
E_0/\cFq : y^2 = x^3 + 49x \:.
\]

The Frobenius endomorphism of $E_0$ is equal to $\tau=25+52i$ (so $a=25$ and $b=52=2^2\cdot 13$), $E_0$ is ordinary and its endomorphism algebra is $\qi$. Moreover, the endomorphism ring of $E_0$ is isomorphic to $\zi$, thus its conductor is equal to $1$.

The orders of $\qi$ containing $\tau$ are precisely those whose conductor belongs to $\mathcal{G}=\{1,2,2^2,13,2\cdot 13,2^2\cdot 13\}$. As observed in Remark~\ref{rem:End}, for every $g\in\mathcal{G}$ there is an elliptic curve $E/\cFq$ s.t.\ $E$ is $\cFq$-isogenous to $E_0$ (i.e.\ $|E(\cFq)|=|E_0(\cFq)|$) and $\End{(E)}\cong\og$.

For instance, we have found the following:
\[
\begin{array}{ll}
E_1 : y^2 = x^3 + x + 57		& \quad \End{(E_1)}\cong \oo_{2^2\cdot 13} \\
E_2 : y^2 = x^3 + x + 98		& \quad \End{(E_2)}\cong \oo_{13} \\
E_3 : y^2 = x^3 + x + 378		& \quad \End{(E_3)}\cong \oo_{2\cdot 13} \\
E_4 : y^2 = x^3 + 3x + 1152		& \quad \End{(E_4)}\cong \oo_{2} \\
E_5 : y^2 = x^3 + 30x + 351		& \quad \End{(E_5)}\cong \oo_{2^2} \:.
\end{array}
\]

In this example $\p\subseteq\{2,13\}$. Moreover, we have
\begin{equation*}
v_{2}(b) = 2,\:\: e_{2} = 1,\:\: 1 \leq s_{2} \leq 2 \quad \text{and} \quad
v_{13}(b) = 1,\:\: e_{13} = 2,\:\: s_{13} = 1.
\end{equation*}

Therefore
\begin{align}
v_{2}(a^{e_{2}}-1) - v_{2}(e_{2}) = v_{2}(24) = 3 \quad & \text{and} \quad 0 \leq v_{2}(b)-s_{2} \leq 1 \:, \label{eq:q=3329_p=2} \\
v_{13}(a^{e_{13}}-1) - v_{13}(e_{13}) = v_{13}(25^2-1) = 1 \quad & \text{and} \quad v_{13}(b)-s_{13} = 0 \:. \label{eq:q=3329_p=13}
\end{align}

By Theorem~\ref{proposition:valuation-equivalence} and \eqref{eq:q=3329_p=2} we obtain
\[
v_2(a_k-1)>v_2(b_k)-s_2 \quad \text{ for all } k\geq 1 \:,
\]
thus every time that $2\in\p$, we have $E_i(\cFqk)\not\cong E_j(\cFqk)$, regardless of $k$.

Proposition~\ref{proposition:valuation-equivalence} and \eqref{eq:q=3329_p=13} also yield
\[
v_{13}(a_k-1)>v_{13}(b_k)-s_{13} \quad \text{ for all } k\geq 1 \text{ s.t.\ } 2\mid k  \:,
\]
therefore when $\p=\{13\}$ we have $E_i(\cFqk)\cong E_j(\cFqk)$ if and only if $2\nmid k$.
\\

For instance, consider $E_0: y^2 = x^3 + 49x$ and $E_1 : y^2 = x^3 + x + 57$.

Since $\End{(E_0)}\cong\oo_{1}$ and $\End{(E_1)}\cong\oo_{2^2\cdot 13}$, we have $\p=\{2,13\}$ and
\[
E_0(\cFqk)\not\cong E_1(\cFqk) \quad \text{for all } k\geq 1 \:.
\]

Now consider $E_3 : y^2 = x^3 + x + 378$ and $E_4 : y^2 = x^3 + 3x + 1152$. Then $\End{(E_3)}=\oo_{2\cdot 13}$ and $\End{(E_4)}=\oo_{2}$, so $\p=\{13\}$ and
\[
E_3(\cFqk)\cong E_4(\cFqk) \quad \Leftrightarrow \quad 2\nmid k \:.
\]

In a similar manner we can compare all the other curves. We have summarised the outcome in Table~\ref{table:summary_q=3329_tr=50}.

\begin{table}[!ht]
\begin{center}
\begin{tabular}{*{7}{c|}}
$\cong$	& $E_0$	& $E_1$	& $E_2$	& $E_3$	& $E_4$	& $E_5$ \\
\hline
$E_0$	& --	& none		& $k$ odd	& none		& none		& none \\
$E_1$	& --	& --		& none		& none		& none		& $k$ odd \\
$E_2$	& --	& --		& --		& none		& none		& none \\
$E_3$	& --	& --		& --		& --		& $k$ odd	& none \\
$E_4$	& --	& --		& --		& --		& --		& none \\
$E_5$	& --	& --		& --		& --		& --		& --
\end{tabular}
\caption{Values of $k$ for which the groups of $\cFqk$-rational points are isomorphic.}
\label{table:summary_q=3329_tr=50}
\end{center}
\end{table}

\end{eg}


\begin{eg} \label{eg:q=3329,tr=104}

Let $q=3329$ and consider the following elliptic curves defined over $\cFq$:
\begin{align*}
E_0 & : y^2 = x^3 + 99x \\
E_1 & : y^2 = x^3 + x + 72 \\
E_2 & : y^2 = x^3 + x + 192 \:.
\end{align*}

One can verify that these curves are all ordinary and $\cFq$-isogenous, with $|E_i(\cFq)|=3226$ and $\tau=52+25i$. Their endomorphism algebra is $\qi$, thus $a=52$, $b=25$ and $g\in\{1,5,25\}$. However, the three curves have different endomorphisms rings; more precisely
\begin{equation*}
\End{(E_0)}\cong\zi \:, \quad \End{(E_1)}\cong\zet[25i] \:, \quad \End{(E_2)}\cong\zet[5i] \:.
\end{equation*}

For all the curves above we have that $\p=\{5\}$, $v_5(b)=2$, $1\leq s_5\leq 2$, $e_5=\ord{(a)}\pmod{5}=4$ and
\begin{equation*} 
v_{5}(a^{e_{5}}-1) - v_{5}(e_{5}) = v_{5}(52^4-1) = 1 \:.
\end{equation*}

Let $k$ be a positive integer. By Theorem~\ref{proposition:valuation-equivalence}, if $e_5\nmid k$, i.e.\ $4\nmid k$, then $v_5(a_k-1)\leq v_5(b_k)-s_5$, and therefore $E_i(\cFqk)\cong E_j(\cFqk)$ ($i,j=0,1,2$). If $k$ is a multiple of $4$, the groups of $\cFqk$-rational points are isomorphic if and only if $v_{5}(a^{e_{5}}-1) \leq v_5(b)-s_5$, namely $s_5\leq 1$.

For instance, consider the curves $E_0$ and $E_1$. Their conductors are $1$ and $25$ respectively. Then in this case $s_5=2$ and therefore
\[
E_0(\cFqk)\cong E_1(\cFqk) \quad \Leftrightarrow \quad 4\nmid k \:.
\]

On the other hand, consider $E_0$ and $E_2$. Since the conductor of $E_2$ is equal to $5$, we have $s_5=1$ and then
\[
E_0(\cFqk)\cong E_2(\cFqk) \quad \text{for all } k\geq 1 \:.
\]

Clearly, we also have $E_1(\cFqk)\cong E_2(\cFqk)$ $\Leftrightarrow$ $4\nmid k$.

\end{eg}


\begin{eg} \label{eg:q=1031,tr=-20}

Let $q=1031$ and consider the following elliptic curves defined over $\cFq$:
\begin{align*}
E_1 & : y^2 = x^3 + 982x + 824 \\
E_2 & : y^2 = x^3 + x + 13 \\
E_3 & : y^2 = x^3 + x + 89 \\
E_4 & : y^2 = x^3 + 168x + 48 \:.
\end{align*}

One can verify that these curves are all ordinary and $\cFq$-isogenous, with $\tau=-10+7\sqrt{-19} = -17+14\delta$ where $\delta=\frac{1+\sqrt{-19}}{2}$, since the ring of integers of $\qu(\tau)$ is $\zd=\oo_1$. Thus $a=-17$, $b=14$ and $g\in\{1,2,7,14\}$. The respective endomorphisms rings are
\begin{align*}
\End{(E_1)} & \cong\oo_7 \:, \\
\End{(E_2)} & \cong\oo_1 \:, \\
\End{(E_3)} & \cong\oo_{14} \:, \\
\End{(E_4)} & \cong\oo_2 \:.
\end{align*}

We have that $\p\subseteq\{2,7\}$. Moreover,
\begin{equation*}
v_{2}(b) = 1,\:\: e_{2} = 2,\:\: s_{2}=1 \quad \text{and} \quad
v_{7}(b) = 1,\:\: e_{7} = 3,\:\: s_{7}=1 .
\end{equation*}

Let $k$ be a positive integer. If $k$ is not a multiple of $e_{7}=3$, then $v_{7}(a_k-1)\leq v_{7}(b_k)-s_{7}$. Otherwise,	
\[ v_{7}(a^{e_{7}}-1) - v_{7}(e_{7}) = v_{7}({(-17)}^{3}-1) = 1 > 0 = v_{7}(b)-s_{7} \:. \]

We are in the nasty case of $v_{2}(b)=1$, so in order to apply Theorem~\ref{proposition:valuation-equivalence}, we have to assume that $2\nmid k$. Then $(v_{2}(k),v_{2}(k)-s_2)=(0,0)$ and therefore $v_2(a_k-1)>v_2(b_k)-s_2$ for all odd $k$'s. Suppose now that $2\mid k$ and write
\begin{equation*}
{(a+b\delta)}^k = {(A+B\delta)}^{k/2} = A_{k/2} + B_{k/2}\delta \:,
\end{equation*}		
so that $A_{k/2}=a_k$ and $B_{k/2}=b_k$. Moreover
\[ {(a+b\delta)}^2 = {(-17+14\delta)}^2 = -691 - 280\delta \:, \]
thus $A=-691$ and $B=-280$, so now $v_2(B)=3>1$ and we can apply Theorem~\ref{proposition:valuation-equivalence} to ${(A+B\delta)}^{k/2}$. Let ${e'}_2=\ord{(A)} \pmod{4}$, so ${e'}_2=1$. Then we have
\begin{align*}
v_{2}(A^{{e'}_{2}}-1) - v_{2}({e'}_{2}) = v_{2}(-692) & = 2 \:,\\
v_{2}(B)-s_{2} & = 2 \:.
\end{align*}

This yields $v_{2}(A_{k/2}-1)\leq v_{2}(B_{k/2})-s_{2}$, namely $v_{2}(a_k-1)\leq v_{2}(b_k)-s_{2}$, for all even $k$'s.
\\

We conclude that (for $i,j\in\{1,2,3,4\}$)
\begin{itemize}
	\item $\p=\{2\}$: $E_i(\cFqk)\cong E_j(\cFqk)$ if and only if $2\mid k$ ;
	\item $\p=\{7\}$: $E_i(\cFqk)\cong E_j(\cFqk)$ if and only if $3\nmid k$ ;
	\item $\p=\{2,7\}$: $E_i(\cFqk)\cong E_j(\cFqk)$ if and only if $2\mid k$ and $3\nmid k$ ;
\end{itemize}

These comparisons are displayed in Table~\ref{table:summary_q=1031_tr=-20}.

\begin{table}[!ht]
\begin{center}
\begin{tabular}{*{5}{c|}}
$\cong$		& $E_1$	& $E_2$	& $E_3$	& $E_4$	\\
\hline
$E_1$	& --		& $3\nmid k$	& $2\mid k$		& $2\mid k$ and $3\nmid k$ \\
$E_2$	& --		& --			& $2\mid k$ and $3\nmid k$		& $2\mid k$ \\
$E_3$	& --		& --			& --			& $3\nmid k$ \\
$E_4$	& --		& --			& --			& --
\end{tabular}
\caption{Values of $k$ for which the groups of $\cFqk$-rational points are isomorphic.}
\label{table:summary_q=1031_tr=-20}
\end{center}
\end{table}

\end{eg}


\clearpage

\bibliography{hyperbiblio.bib}
\bibliographystyle{plain}

\end{document}